\newcommand{\bX}{\mathbf{X}}
\newcommand{\bY}{\mathbf{Y}}
\newcommand{\Z}{\mathbb{Z}}
\newcommand{\N}{\mathbb{N}}
\newcommand{\X}{\mathbf X}
\newcommand{\Y}{\mathbf Y}
\def\square{\ifmmode\sqr\else{$\sqr$}\fi}
\def\one{{\bf 1}\hskip-.5mm}
\def\P{{\mathbb P}}
\def\Z{{\mathbb Z}}
\def\N{{\mathbb N}}
\def\Y{{\bf Y}}
\def\b{{\bf b}}
\def\sqr{\vcenter{
         \hrule height.1mm
         \hbox{\vrule width.1mm height2.2mm\kern2.18mm\vrule width.1mm}
         \hrule height.1mm}}                  
\def\square{\ifmmode\sqr\else{$\sqr$}\fi}
\def\one{{\bf 1}\hskip-.5mm}
\newtheorem{lemma}{Lemma}[section]
\newtheorem{defi}{Definition}[section]
\newtheorem{theo}{Theorem}[section]
\newtheorem*{theo*}{Theorem}
\newtheorem{prop}{Proposition}[section]
\newtheorem{notalert}{Notation alert}[section]
\newtheorem{example}{Example}[section]
\begin{document}


\title{Continuity properties of a factor of Markov chains}

\author{Walter A. F. de Carvalho$^{1}$, Sandro Gallo$^{2}$, and Nancy L. Garcia$^{1}$\thanks{Corresponding author; $^{1}$ University of Campinas; $^{2}$ Federal University of Rio de Janeiro; Brasil}} 

\maketitle

\begin{abstract}
Starting from a Markov chain with a finite alphabet, we consider the
chain obtained when all but one symbol are undistinguishable
for the practitioner. We study necessary and sufficient conditions for
this  chain to {have continuous transition probabilities with respect 
to the past.} 
\end{abstract}

\section{Introduction}

  Consider a positive recurrent, stationary Markov chain $\bX = \{X_t,
  t \in \Z\}$ in a finite (or countable) alphabet $A$ with transition
  matrix $P^{\bf X}$ and a factor map, $\pi: A \rightarrow B$ with
  $\textrm{card}(B)<\textrm{card}(A)$. Hidden Markov models obtained
  through factor mappings of Markov Chains have received a lot of
  attention for a long time and { an exhaustive listing would be
    impossible. We therefore mention only some paper that are most
    directly related to our interest. \citeasnoun{harris:1955} proved
    that, in general, the image process $\bY= \{Y_t=\pi(X_t), t \in \Z\}$ is a
    chain with infinite order {with continuous transition
      probabilities decaying exponentially fast if $P$ is strictly
      positive (i.e., $P^{\bf X}_{ij} >0$ for all $i,j \in A$). This result
      was then extended in \citeasnoun{verb:2011} and
      \citeasnoun{verb:2011b}, to the case where the original chain
      ${\bf X}$ is not necessarily a Markov chain, but only needs to
      be a chain with strictly positive and continuous transition
      probabilities.  In the statistical physics and dynamical systems
      contexts, \citeasnoun{chazottes/ugalde/2003} were interested in
      determining whether the image process of a Markov process
      satisfies (Bowen's) Gibbsianess.  Finally, another related study
      is that of preservation of (order 1) Markovianess, we refer for instance
      to \citeasnoun{burk:rose:1958} who gave necessary and sufficient
      conditions under which $\bY$ is still a Markov process.}

In this work, we focus our efforts on the special case where $B=
\{0,1\}$ and $\pi^{-1}(1)$ is a singleton. Without loss of generality,
we take $A = \{1, \ldots, m\}$ and $\pi(x) = \one(x=1)$. In this case,
it is well-known that the image process {$\bY= \{Y_t, t \in \Z\}$ is
  always a discrete time binary renewal chain, and it is an easy
  matter to obtain an explicit matricial form for its transition
  probabilities. This explicitness then allows us to search not only
  for sufficient conditions (as in most of the previously cited
  article), but also for necessary conditions under which the image
  process has continuous transition probabilities. We refer the reader
  to Section \ref{sec:cont} for a more detailed explanation of our
  objectives, assumptions and results.  }

The paper is organized as follows. In Section \ref{sec:notation} we
  introduce the notation and basic definitions. The main results and
  their proofs are given in Section \ref{sec:cont}, each theorem being
  directly followed by its proof. We conclude with a section
  containing some examples and a discussion comparing our results to
  those of the related literature.

\section{Notation and {basic definitions}}\label{sec:notation}

\noindent{\bf Notation.}  

\vspace{0.3cm}

\noindent Let $A$ be a countable alphabet. Given two
integers $m\leq n$, we denote by $a_m^n$ the string $a_m \ldots a_n$
of symbols in $A$.
 For any $m\leq n$, the length of the string $a_m^n$
is denoted by $|a_m^n|$ and defined by $n-m+1$.
  Given two strings $v$
and $v'$, we denote by $vv'$ the string of length $|v| + |v'| $
obtained by concatenating the two strings. The concatenation of
strings is also extended to the case where $v=\ldots a_{-2}a_{-1}$ is
a semi-infinite sequence of symbols. Let $A^{-\mathbb{N}}=A^{\{\ldots,-2,-1\}}$
be the set of all infinite strings of past symbols. Finally, we denote by $\underline{a}=\ldots a_{-2}a_{-1}$ the elements of $A^{-\mathbb{N}}$.
%

\vspace{0.3cm}

\noindent{\bf Kernel, compatibility and continuity.}
\begin{defi}[Kernel] A \emph{transition probability kernel} (or simply \emph{kernel} in the sequel) on a countable alphabet $A$ is a (measurable) function
\begin{equation}
\begin{array}{cccc}
P:&A\times A^{-\mathbb{N}}&\rightarrow&
[0,1]\\ &(a,\underline{a})&\mapsto&P(a|\underline{a})
\end{array}
\end{equation}
such that $\sum_{a\in A}P(a|\underline{a})=1$ for any $\underline{a}\in A^{-\mathbb{N}}$.
\end{defi}

For instance, a given kernel $P$ is Markovian of order $k\ge1$ if 
$P(a|\underline{a})=P(a|\underline{b})$ for any $\underline{a}$ and $\underline{b}$ such that
$a_{-k}^{-1}=b_{-k}^{-1}$. So the above definition is a natural extension of the notion of transition matrix, the later being a terminology commonly used for $k$-steps Markov chains. 

A  kernel is sometimes called set of transition probabilities, a terminology that we used in the introduction. It is important to notice that these are transition probabilities with respect to the ``past''. When considering transition probabilities with respect to both, past and future, as we will do in Section \ref{sec:discussion} when comparing our results to the notion of Gibbsianess, we will use the terminology \emph{specification}.

\vspace{0.3cm}

In this paper, stochastic chains are regarded as bi-infinite sequence of random variables whose dependence information is given by the kernel. We therefore need to define what we mean by a stochastic chain being compatible with a given kernel.

\begin{defi}[Compatibility] \label{def:chain}A stationary stochastic chain ${\bf X}=(X_{n})_{n\in\Z}$ is said to be \emph{compatible} with a kernel $P$ if the later is a regular version of the conditional probabilities of the former, that is
\begin{equation}\label{compa}
\P(X_{0}=a_0|X_{-\infty}^{-1}=a_{-\infty}^{-1})=P(a|a_{-\infty}^{-1})
\end{equation}
for every $a_0\in A$ and $\P$-a.e. $a_{-\infty}^{-1}$ in $A^{-\mathbb{N}}$.
\end{defi}

Let us emphasize that stationarity here means that for any $n\ge1$ and any $a_{0}^{n-1}$, $\P(X_{0}^{n-1}=a_{0}^{n-1})=\P(X_{k}^{k+n-1}=a_{0}^{n-1})$ for any $k\in \Z$.

As an example for Definition \ref{def:chain}, a stationary $1$-step Markov chain satisfies that $\P(X_0=a_0|X_{-1}=a_{-1})=P_{a_{-1}a_0}$ for any $a_{-1},a_0\in A$, where $P_{a_{-1}a_0}:=P(a_0|\underline{a})$, since this later does not depend on $a_{-\infty}^{-2}$. In what follows, $P_{ij},\,i,j\in A$, will always denote the entries of a transition matrix on $A$.

\vspace{0.3cm}

%

We finally need to introduce what we mean by continuous kernel, the main notion of interest of the present paper.

\begin{defi}[Continuity]\label{def:cont} A  kernel $P$ is
\emph{continuous} (with respect to the product topology) at some point
$\underline{a}$ if $P(a|a_{-i}^{-1}\underline{z})\rightarrow P(a|\underline{a})$
whenever $i$ diverges, for any $\underline{z}$. Naturally, a kernel $P$ is \emph{continuous} if it is continuous at every point. 
\end{defi}
In the above definition, $P(a|a_{-i}^{-1}\underline{z})$ stands for the transition probability from the (concatenated) past $\underline{z}a_{-i}^{-1}$ to the symbol $a$, we reversed time in the conditioning.

It is clear that $k$-steps Markov kernels are continuous since for them, $P(a|a_{-i}^{-1}\underline{z})=P(a|a_{-k}^{-1}\underline{z})$ for any $i\ge k$. So continuous kernels constitute a natural
extension of $k$-steps Markov kernels. The next definition aims to quantify \emph{how much} continuous is a given kernel.

\begin{defi}[Continuity rate] \label{txcont}
The continuity rate of a  kernel $P$ is defined, for any $k\ge1$, by
$$\beta(k)\,:=\, \sup_{a_{-k}^{0}}\sup_{\underline{b},\underline{c}}|P(a_0|a_{-k}^{-1}\underline{b})-P(a_0|a_{-k}^{-1}\underline{c})|.$$
\end{defi}

Observe that if the alphabet $A$ is finite, the compactness of $A^{-\N}$ implies that $P$ is continuous if and only if its continuity rate converges to $0$.

\section{Results and Proofs}\label{sec:cont}

One of the main assumption of the present paper is that we restrict our considerations to a particular case of mapping. 

\begin{defi}[Aggregation map]\label{def:ag} A map $\pi: A \rightarrow \{0,1\}$ defined by $\pi(a) = \one(a=1)$ will be called {\rm aggregation map}. \end{defi}

\begin{notalert} We will use the same notation $\pi$ to denote the map $\pi:A^n \rightarrow \{0,1\}^n$ defined by $\pi(a_{1}^{n})_i = \one(a_i=1)$. The generalization for infinite sequences and processes is analogous. \end{notalert}

In the sequel, we will essentially study two stochastic chains, the original Markov chain ${\bX}$ and its image $\bY$. in order to avoid confusions, we will  index quantities by the related stochastic chains: $P^{{\bf X}}$ and $P^\bY$ for the kernels, and  $\beta^{{\bf X}}(k)$ and $\beta^\bY$ for the continuity rates.

\subsection{Preliminary considerations}
%
Focussing on aggregation maps (see Definition \ref{def:ag}) allows to obtain a  closed formula for the continuity rate of the image process, as stated by the next proposition. 

 For any $\underline{a}\in\{0,1\}^{-\N}$, let
\[
\ell(\underline{a}):=\inf\{i\ge0:a_{-i-1}=1\},
\]
using the convention that $\ell(\ldots00)=\infty$. Also let us introduce the $[0,1]$-valued sequence $p_k,\,k\ge0$, defined by $p_0:=P_{11}=\P(X_0=1|X_{-1}=1)$ and for any $k\ge1$,
\begin{equation}\label{eq:q}
p_k:=\frac{\sum_{b_{-k}^{-1}\in\pi^{-1}(0_{-k}^{-1})}\P(X_{-k}^{-1}=b_{-k}^{-1},\,X_{0}=1|X_{-k-1}=1)}{\sum_{b_{-k}^{-1}\in\pi^{-1}(0_{-k}^{-1})}\P(X_{-k}^{-1}=b_{-k}^{-1}|X_{-k-1}=1)}.
\end{equation}

\begin{prop}\label{prop:preliminary}
If ${\bf X}$ is a stationary positive recurrent Markov chain, then the continuity rate of $P^{{\bf Y}}$ is
\[
\beta^{{\bf Y}}(k)\le \sup_{l,m\ge k}|p_l-p_m|.
\]
\end{prop}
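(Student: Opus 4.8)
The plan is to compute the image kernel $P^{\bf Y}$ explicitly for aggregation maps and then read off the continuity rate. The key observation is that, since $\pi^{-1}(1)=\{1\}$, a finite past $y_{-k}^{-1}$ for ${\bf Y}$ records exactly the positions of the $1$'s among the last $k$ coordinates of ${\bf X}$; and since ${\bf X}$ is Markov of order $1$, conditioning on the most recent $1$ in the ${\bf X}$-past screens off everything before it. So first I would fix a past $\underline{a}\in\{0,1\}^{-\N}$ and split on the value of $\ell(\underline{a})$, the distance back to the most recent $1$. If $\ell(\underline{a})=0$ (i.e.\ $a_{-1}=1$), then by the Markov property $P^{\bf Y}(1|\underline{a})=\P(X_0=1|X_{-1}=1)=p_0$, with no dependence on the rest of the past at all. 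If $\ell(\underline{a})=k\ge 1$, then the relevant ${\bf X}$-event is $\{X_{-k-1}=1,\ X_{-k}^{-1}\in\pi^{-1}(0_{-k}^{-1})\}$ (the last $1$ sits at position $-k-1$, and the intervening $k$ symbols are all $\ne 1$), and conditioning on the past of ${\bf Y}$ that agrees with $\underline{a}$ up to coordinate $-k-1$ gives, by the definition of conditional probability and the order-$1$ Markov property that kills the dependence on $X_{-k-2}^{-\infty}$, exactly
\[
P^{\bf Y}(1|\underline{a}) \;=\; \frac{\sum_{b_{-k}^{-1}\in\pi^{-1}(0_{-k}^{-1})}\P(X_{-k}^{-1}=b_{-k}^{-1},\,X_0=1\mid X_{-k-1}=1)}{\sum_{b_{-k}^{-1}\in\pi^{-1}(0_{-k}^{-1})}\P(X_{-k}^{-1}=b_{-k}^{-1}\mid X_{-k-1}=1)}\;=\;p_k,
\]
so $P^{\bf Y}(1|\underline{a})=p_{\ell(\underline{a})}$ whenever $\ell(\underline{a})<\infty$. (Positive recurrence guarantees the denominator is strictly positive, so the expression is well defined.)

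With this formula in hand the bound on $\beta^{\bf Y}(k)$ is immediate. Fix $k\ge 1$ and two pasts $\underline{b},\underline{c}$ that agree with a common block $a_{-k}^{-1}$ on their last $k$ coordinates. If $a_{-k}^{-1}$ contains a $1$, say the most recent one is at position $-j$ with $1\le j\le k$, then $\ell(a_{-k}^{-1}\underline{b})=\ell(a_{-k}^{-1}\underline{c})=j-1$, both transition probabilities equal $p_{j-1}$, and the difference is $0$. If $a_{-k}^{-1}=0_{-k}^{-1}$, then $\ell(a_{-k}^{-1}\underline{b})\ge k$ and $\ell(a_{-k}^{-1}\underline{c})\ge k$ (they could be $\infty$, in which case one must argue that $P^{\bf Y}(1|\ldots 00)$ is still a limit point of the $p_l$'s, or simply handle the all-zero past by continuity/approximation), so $|P^{\bf Y}(1|a_{-k}^{-1}\underline{b})-P^{\bf Y}(1|a_{-k}^{-1}\underline{c})|=|p_{l}-p_{m}|$ for some $l,m\ge k$. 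Taking the supremum over $\underline{b},\underline{c}$ and over $a_{-k}^{0}$ (the value of $a_0$ only toggles between $P^{\bf Y}(1|\cdot)$ and $1-P^{\bf Y}(1|\cdot)$, which has the same oscillation) yields $\beta^{\bf Y}(k)\le\sup_{l,m\ge k}|p_l-p_m|$.

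The main obstacle I anticipate is the careful bookkeeping in the conditional-probability computation: one must verify that conditioning on a semi-infinite ${\bf Y}$-past really does reduce, via the order-$1$ Markov property of ${\bf X}$, to conditioning only on $\{X_{-k-1}=1\}$ together with the intervening non-$1$ constraint — i.e.\ that the tail $X_{-\infty}^{-k-2}$ genuinely drops out — and that the ratio in \eqref{eq:q} is exactly the aggregated conditional law, using stationarity to move between $\P(\cdot\mid X_{-k-1}=1)$ and the corresponding one-step transition probabilities. One also has to treat the degenerate all-zeros past $\ldots 00$ (where $\ell=\infty$): either it has probability zero under $\P$ when the chain visits state $1$ infinitely often (which positive recurrence ensures on the relevant event), so it can be ignored in the essential supremum, or $P^{\bf Y}(1|\ldots00)$ must be defined as a limit and shown to lie within the same oscillation bracket. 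These points are routine but need to be stated cleanly; everything else is a short algebraic manipulation.
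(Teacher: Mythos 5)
Your proposal is correct and follows essentially the same route as the paper: compute the aggregated kernel via the Markov property and the position of the last $1$, obtain $P^{\bf Y}(1|\underline{a})=p_{\ell(\underline{a})}$, and bound the oscillation over pasts ending in $0_{-k}^{-1}$ by $\sup_{l,m\ge k}|p_l-p_m|$. The only caution is at the all-zero past: since $\beta^{\bf Y}(k)$ is a genuine supremum (not an essential one), you cannot simply discard $\ldots00$ as a null set, but your second option --- defining $P^{\bf Y}(1|\ldots00)$ as a limit point of $(p_l)$ so it stays inside the oscillation bracket --- is exactly what is needed and is what the paper does.
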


\begin{proof}
For all $k\ge0$, we will compute, for any $i\ge1$ and any $a_{-i}^{-1}\in \{0,1\}^{i}$, 
\begin{equation}\label{eq:1}
\P(Y_{0}=1|Y_{-i-k-1}^{-k-2}=a_{-i}^{-1},\,Y_{-k-1}=1,\,Y_{-k}^{-1}=0_{-k}^{-1}).
\end{equation}
It equals, by the definition of conditioning
\begin{align}\label{eq:2}
\frac{\P(Y_{-i-k-1}^{-k-2}=a_{-i}^{-1},\,Y_{-k-1}=1,\,Y_{-k}^{-1}=0_{-k}^{-1},\,Y_{0}=1)}{\P(Y_{-i-k-1}^{-k-2}=a_{-i}^{-1},\,Y_{-k-1}=1,\,Y_{-k}^{-1}=0_{-k}^{-1})}\end{align}
Now, using that $\pi$ is the aggregating function of Definition \ref{def:ag} and that ${\bf X}$ is Markovian, we have that the numerator of \eqref{eq:2} equals
\[
\sum_{c_{-i}^{-1}\in\pi^{-1}(a_{-i}^{-1})}\P(X_{-i-k-1}^{-k-2}=c_{-i}^{-1},\,X_{-k-1}=1)\sum_{b_{-k}^{-1}\in\pi^{-1}(0_{-k}^{-1})}\P(X_{-k}^{-1}=b_{-k}^{-1},\,X_{0}=1|X_{-k-1}=1),
\]
and analogously, the denominator equals
\[
\sum_{c_{-i}^{-1}\in\pi^{-1}(a_{-i}^{-1})}\P(X_{-i-k-1}^{-k-2}=c_{-i}^{-1},\,X_{-k-1}=1)\sum_{b_{-k}^{-1}\in\pi^{-1}(0_{-k}^{-1})}\P(X_{-k}^{-1}=b_{-k}^{-1}|X_{-k-1}=1).
\]
Thus, the fraction in \eqref{eq:2} factorizes and we  obtain that \eqref{eq:1} equals $p_k$,
independently of $a_{-k}^{-1}$.  
This in particular means that, for any $\underline{a}\in\{0,1\}^{-\N}\setminus\{\ldots00\}$, $\P(Y_0=1|Y_{-k}^{-1}=a_{-k}^{-1})$ converges, since it is constant for any $k\ge \ell(\underline{a})+1$. By \citeasnoun{kalikow/1990}, ${\bf Y}$ has a continuous kernel $P^{{\bf Y}}$ if and only if $\P(Y_0=1|Y_{-k}^{-1}=a_{-k}^{-1})$ converges uniformly on $\{0,1\}^{-\N}$, which in the present case, due to compactness of $\{0,1\}^{-\N}$, amounts to say that $P^\bY$ is continuous if and only if $\P(Y_0=1|Y_{-k}^{-1}=a_{-k}^{-1})$ converges for any $\underline{a}\in\{0,1\}^{-\N}$. Since this quantity converges for any $\underline{a}\in\{0,1\}^{-\N}\setminus\{\ldots00\}$, we have continuity if and only if the convergence occurs at $\ldots00$. 

Now, by the definition of compatibility, we can take $P^{\bY}$ defined for any $\underline{a}$ by $P^{{\bf Y}}(1|\underline{a})=\lim_k\P(Y_0=1|Y_{-k}^{-1}=a_{-k}^{-1})$. In other words, these limits can be taken to be the regular version of the conditional probabilities. In particular, $P^\bY(1|\underline{a})=p_{\ell(\underline{a})}$ for any $\underline{a}\in\{0,1\}^{-\N}\setminus\{\ldots00\}$ and we can take $P^\bY(1|\ldots00)=\lim_kp_k=:p_\infty$ to preserve continuity.

Then, we can compute the continuity rate of $P^\bY$:
\begin{align*}
\beta^\bY(k)&:= \sup_{a_{-k}^{-1}}\sup_{\underline{b},\underline{c}}|P^\bY(1|a_{-k}^{-1}\underline{b})-P^\bY(1|a_{-k}^{-1}\underline{c})|\\
&=\sup_{\underline{b},\underline{c}}|P^\bY(1|0_{-k}^{-1}\underline{b})-P^\bY(1|0_{-k}^{-1}\underline{c})|\\
&=\sup_{\underline{b},\underline{c}}|p_{\ell(\underline{b})+k}-p_{\ell(\underline{c})+k}|\\
&=\sup_{l,m\ge k}|p_l-p_m|
\end{align*}
where in the second line we used the fact that if $a_{-i}=1$ for some $i\in\{1,\ldots,k\}$, then as we said above $P^\bY(1|a_{-k}^{-1}\underline{b})=P^\bY(1|a_{-k}^{-1}\underline{c})=p_i$, and in the third line, we used the fact that the transition probability $P^\bY(1|\underline{a})$ only depends on $\ell(\underline{a})$.

%

\end{proof}

In the above argument, observe that the value of $P^\bY(1|\ldots00)$ can be chosen arbitrarily.
For instance, we could have taken a different value than the limit of $p_k$, creating a discontinuity
at $\ldots00$ according to Definition \ref{def:cont}. But this is only an artificial discontinuity, because  the regular version $P^\bY$ is defined up to sets of null measure.  Thus we will always choose the continuous one. 

In certain cases however, it is impossible to make $P^\bY$ continuous  just changing its value on a set of null measure. For instance, let
$\bX$ be a Markov chain with alphabet $A = \{1,2,3\}$ and transition
matrix given by

$$P^{{\bf X}} = \left( \begin{array}{ccc}
                    \alpha & 1 - \alpha & 0 \\
                    \beta &    0        & 1 - \beta \\
                    \gamma & 1 - \gamma & 0 
                    \end{array}
                    \right).$$
Let $\pi: A \rightarrow \{0,1\}$ be such that $\pi(1) =
1$, $\pi(2)=\pi(3)=0$. In this case, simple calculations show that $p_0=\alpha$, for $k\ge1$ odd, $p_k = \beta$ and for $k\ge2$ even, $p_k =\gamma$. {What happens then is, borrowing the terminology used in Gibbs/Non-Gibbs literature that the kernel $P^{{\bf Y}}$ has an \emph{essential discontinuity} at the point $\ldots00$, independently of the value  $P(1|\ldots00)$ we choose. We refer to \citeasnoun{fernandez/gallo/maillard/2011} for a complete discussion.}

\vspace{0.3cm}

The main objective of this paper is to understand which condition for
$P^{\bf X}$ leads to a continuous/discontinuous process ${\bf Y}$.  To
do so, Proposition \ref{prop:preliminary} says that it suffices to
obtain sufficient conditions for $p_k$ to converge (or not). To study
$p_k$, we are going to use a decomposition of the transition matrix
$P^{\X}$ similar to the one used by by \citeasnoun{darr:sene:1967} in
their study of quasi-stationary measures. This decomposition will
allow us to use Perron-Forbenius theory.

\vspace{0.3cm}

Recall that $P^\bX$ is an $m\times m$ stochastic matrix. Let 
\begin{equation} \label{Matricial} 
  P^{\X}=\left(\begin{array}{cccc}
P_{11} & V \\
W^t & P
\end{array} \right)
  \end{equation}
where 
 \begin{itemize}
 \item $V$ is the row vector $(P_{12}\ldots P_{1m})$; 
 \item $W$ is the row vector $(P_{21}\ldots P_{m1})$, and $W^t$ denotes the transpose of $W$;  
 \item $P$ is the $(m-1)\times(m-1)$ sub-matrix $P_{ij}, i,j\in \{2,\ldots,m\}$. 
 \end{itemize} 

Denote by ${\bf 1}$ and ${\bf 0}$ the row vectors formed by ones and
zeros, respectively. To avoid trivial cases, we will always consider
$V \neq {\bf 0}$ and $W \neq {\bf 0}$. 

\vspace{0.3cm}

\begin{notalert}In the sequel, $P$ will always denote the sub-matrix of $P^\bX$. The kernels will always be indexed by the corresponding process.
\end{notalert}

Recall that $p_0=P_{11}$. We can now rewrite $p_k$, $k\ge1$ (see \eqref{eq:q}) in the following form
\begin{equation} \label{transagreg} 
p_{k+1}=\frac{VP^kW^t}{VP^k{\bf 1}^t}, \,\,\,\,\mbox{ if } k\geq0 
\end{equation}
with $P^0=I$ being the identity $(m-1)\times(m-1)$ matrix.

Before we come to the statements of the results, let us recall the following very basic definitions concerning matrices. 

\begin{defi} A square matrix $(P_{ij})$ is called {\rm  irreducible} if, for any $i,j$ there exists an $n(i,j)$ such that $P^{n}_{ij}>0$. Otherwise, it is called {\rm reducible}. \end{defi}

\begin{defi} A square matrix $(P_{ij})$ is called {\rm primitive} if it is non-negative and there is an $n > 0$ such that $P^{n}_{ij} \neq 0$ for all $i,j$.
\end{defi}

\begin{defi} Let $(P_{ij})$ be non-negative. Fix an state $i$ and define the {\rm period of state $i$} as
$$ h(i)\, := \, {\rm gcm} \{n: P^{n}_{ii} > 0\}.$$
In the case of irreducible matrices, every state has the same period $h$ and this number is called the {\rm period of $P$}. If $h=1$, $P$ is called {\rm aperiodic}. 
\end{defi}
Thus, primitive matrices are irreducible aperiodic
non-negative matrices.

\subsection{The irreducible case}

Let ${\bf X}$ be a stationary Markov process with
space state $A=\{1,2,\ldots,m\}$ and its transition matrix $P^{\bf X}$ decomposed as
(\ref{Matricial}) with $P$ being irreducible matrix. Let $\Y=\pi(\X)$ where $\pi$ is an aggregation
map. Theorems \ref{thm:1} will give necessary and
sufficient conditions under which the process $\Y$ is continuous as well
as its continuity rate.
 \begin{theo} \label{thm:1} 
\begin{enumerate}
\item If $P$ is primitive, then  $P^{\Y}$ is continuous. Moreover
  $$\beta^{\Y}(n) \,=\,
O\left(n^{d_2-1}\left(\frac{|\lambda_2|}{\lambda_1}\right)^n\right)$$
where $\lambda_1$ is the maximal eigenvalue of $P$, $\lambda_2$ and
$d_2$ are, respectively, the value and multiplicity of the second
largest eigenvalue in absolute value.
\item If $P$ has period $h$, then the process $\Y$ is continuous if, and only if, 
\begin{equation} \label{Condcicl}
 \frac{V.G^*.P^r.W^t}{V.G^*.P^r.{\bf 1}^t}\,=\,\frac{V.G^*.W^t}{V.G^*.{\bf 1}^t} 
 \end{equation}
for all $r=1, \ldots, h-1$, where $G^*=\phi^*.(\psi^{*})^t>0$, and $\phi^*$ and $\psi^*$ are the Perron vectors corresponding to the Perron value $\lambda_1^*$ from $P^h$ and $(P^{h})^t$, respectively. 
\end{enumerate}
\end{theo}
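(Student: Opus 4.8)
The plan is to analyze the sequence $p_{k+1} = VP^kW^t / VP^k\mathbf{1}^t$ directly via the spectral structure of $P$, using Proposition~\ref{prop:preliminary}, which reduces everything to showing convergence of $p_k$ (for part 1) and identifying exactly when the limit exists (for part 2). In both cases the key fact is that, since $P$ is irreducible, Perron--Frobenius theory applies to $P^h$ (which is primitive for $h$ the period, on each cyclic class), and the asymptotics of $P^k$ are governed by the leading eigenvalue and the subdominant spectrum.

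For part~1, suppose $P$ is primitive with simple dominant eigenvalue $\lambda_1>0$ and Perron left/right eigenvectors $\psi, \phi$ normalized so $\psi^t\phi = 1$. Then $P^k = \lambda_1^k\,\phi\psi^t + R_k$ where $R_k = O(k^{d_2-1}|\lambda_2|^k)$ by the Jordan-form decomposition of the action of $P$ on the complementary invariant subspace. Substituting into \eqref{transagreg}, both numerator and denominator are $\lambda_1^k$ times a leading term plus an $O(k^{d_2-1}|\lambda_2|^k)$ correction:
\begin{equation*}
p_{k+1} = \frac{(V\phi)(\psi^t W^t) + O(k^{d_2-1}(|\lambda_2|/\lambda_1)^k)}{(V\phi)(\psi^t\mathbf{1}^t) + O(k^{d_2-1}(|\lambda_2|/\lambda_1)^k)}.
\end{equation*}
Since $V\neq\mathbf{0}$, $W\neq\mathbf{0}$ and $\phi,\psi>0$, the leading terms $V\phi$ and $\psi^t\mathbf{1}^t$ are strictly positive, so the denominator is bounded away from $0$; hence $p_{k+1}$ converges to $(\psi^t W^t)/(\psi^t\mathbf{1}^t)=:p_\infty$, and a first-order expansion of the ratio gives $|p_{k+1}-p_\infty| = O(k^{d_2-1}(|\lambda_2|/\lambda_1)^k)$. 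Feeding this into $\beta^{\bf Y}(n) = \sup_{l,m\ge n}|p_l-p_m| \le 2\sup_{l\ge n}|p_l-p_\infty|$ yields the stated rate (the polynomial factor $n^{d_2-1}$ is preserved since the tail sup of $k^{d_2-1}r^k$ is $\Theta(n^{d_2-1}r^n)$ for $r<1$).

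For part~2, let $h\ge 2$ be the period. Split the index $k = qh + r$ with $r\in\{0,\dots,h-1\}$. On each residue class $r$, $P^{qh}$ acts like the primitive matrix $(P^h)^q$, so $P^{qh} = (\lambda_1^*)^q\, G^* + O(q^{*}\,\rho^q)$ with $G^* = \phi^*(\psi^*)^t$ the Perron projection of $P^h$ (here $\rho$ is the subdominant modulus, $<\lambda_1^*$). Then
\begin{equation*}
p_{qh+r+1} = \frac{VP^{r}P^{qh}W^t}{VP^{r}P^{qh}\mathbf{1}^t} \longrightarrow \frac{VP^{r}G^*W^t}{VP^{r}G^*\mathbf{1}^t} =: \pi_r \qquad (q\to\infty),
\end{equation*}
by the same ratio argument as in part~1 (one must check $VP^rG^*\mathbf{1}^t>0$, which holds because $G^*>0$ and $VP^r$ is nonzero and nonnegative — this uses irreducibility to see $VP^r\neq\mathbf 0$). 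Note $VP^rG^* = (VP^r\phi^*)(\psi^*)^t$, a positive multiple of $(\psi^*)^t$, so $\pi_r = (\psi^*)^t W^t / (\psi^*)^t\mathbf 1^t$ only when $r=0$; in general $\pi_r = VP^rG^*W^t / VP^rG^*\mathbf 1^t$, and after pulling the scalar $VP^r\phi^*$ through one sees this equals the left-hand side of \eqref{Condcicl} up to the trivial relabeling $G^* = \phi^*(\psi^*)^t$. Thus $p_k$ splits into $h$ convergent subsequences with limits $\pi_0,\dots,\pi_{h-1}$, and $\sup_{l,m\ge n}|p_l-p_m|\to 0$ if and only if all these limits coincide, i.e. $\pi_r = \pi_0$ for $r=1,\dots,h-1$ — which is precisely \eqref{Condcicl}. (Necessity: if some $\pi_r\neq\pi_0$ then $\beta^{\bf Y}(n)\ge|\pi_r-\pi_0|-o(1)$ does not vanish; sufficiency: each subsequence converges and they share a limit, so the whole sequence converges.)

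The main obstacle I anticipate is bookkeeping the Perron--Frobenius decomposition cleanly in the periodic case: $P$ itself is not primitive, so one cannot directly write $P^k \sim \lambda_1^k\phi\psi^t$, and one must instead pass to $P^h$, track the cyclic block structure, and verify that the relevant contractions $VP^r\phi^*$, $(\psi^*)^t\mathbf 1^t$, $(\psi^*)^tW^t$ are all strictly positive so that none of the ratios degenerate — this positivity is where the hypotheses $V\neq\mathbf 0$, $W\neq\mathbf 0$ and irreducibility of $P$ are really used. A secondary care point is justifying the first-order expansion of the ratio $\frac{a+\varepsilon}{b+\delta}$ uniformly, which is routine once the denominator is bounded below.
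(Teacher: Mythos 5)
Your argument follows essentially the same route as the paper's: Proposition \ref{prop:preliminary} reduces continuity to convergence of $p_k$; the primitive case is handled by the Perron--Frobenius expansion $(P/\lambda_1)^n=G+O\bigl(n^{d_2-1}(|\lambda_2|/\lambda_1)^n\bigr)$ fed through the ratio \eqref{transagreg}; the periodic case by splitting indices into residues mod $h$, passing to $P^h$, and observing that continuity holds if and only if the $h$ subsequential limits coincide, which is exactly \eqref{Condcicl}. Your write-up is, if anything, more careful on the routine points (denominators bounded below, the passage from $|p_k-p_\infty|$ to the tail sup defining $\beta^{\Y}$, and the explicit necessity/sufficiency bookkeeping).

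One remark in your part 2 does need correction. You assert that $VP^rG^*=(VP^r\phi^*)(\psi^*)^t$ is a positive multiple of $(\psi^*)^t$; if that were so, $\pi_r$ would be independent of $r$, condition \eqref{Condcicl} would hold automatically, and part 2 would be vacuous --- contradicting the paper's three-state example after Proposition \ref{prop:preliminary}, where the $p_k$ oscillate between $\beta$ and $\gamma$. Your hedge ``only when $r=0$'' does not follow from the rank-one identity and signals the inconsistency. The source of the trouble (shared by the wording of the theorem and by the paper's own proof, so you inherited it) is that for $h\ge 2$ the matrix $P^h$ is \emph{not} primitive: it is block diagonal, one primitive block per cyclic class, all with the same Perron root $\lambda_1^*=\lambda_1^h$, cf.\ \eqref{Phcicl}. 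Consequently $G^*=\lim_n (P^h/\lambda_1^*)^n$ is a direct sum of $h$ rank-one projections, not a strictly positive rank-one matrix $\phi^*(\psi^*)^t$. With $G^*$ read this way, simply delete the rank-one reduction and keep $\pi_r=VG^*P^rW^t/VG^*P^r\mathbf{1}^t$ (your $VP^rG^*=VG^*P^r$ since powers of $P$ commute with the limit); positivity of the denominator follows because $V\neq\mathbf{0}$ forces a positive entry of $VG^*$ on an entire cyclic class, while irreducibility makes every row of $P^r$ nonzero, so $P^r\mathbf{1}^t>0$ componentwise. With that repair your subsequence argument coincides with the paper's and the proof is complete.
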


\begin{proof} {\bf If $P$ is primitive}, then there exits an unique maximal
eigenvalue $\lambda_1$ (Perron-Frobenius eigenvalue or Perron root),
which is strictly greater in absolute value than all other
eigenvalues and $\lambda_1$  is the spectral radius of $P$.
Therefore,
$$\lim_{n\rightarrow\infty}\left(\frac{P}{\lambda_1}\right)^n=G$$
and 
  \begin{equation} \label{Teoprim}
  p_\infty=\lim_{n\rightarrow\infty}p_n=\frac{VGW^t}{VG{\bf 1}^t}
  \end{equation} 
where $G=\phi.\psi^t>0$, and $\phi$ and $\psi$ are the Perron vectors corresponding to $\lambda_1$ from $P$ and $P^t$, respectively. 

Moreover, let $\lambda_2$ be the second largest eigenvalue in absolute
value and $d_2$ its multiplicity. Then, it is known (Theorem 1.2,
p. 9, \citeasnoun{sene:2006}) that
 \begin{equation} \label{conv.lambda}
\left(\frac{P}{\lambda_1}\right)^n=G+O\left(n^{d_2-1}.\left(\frac{|\lambda_2|}{\lambda_1}\right)^n\right).
 \end{equation}

\begin{eqnarray*}
\beta^{\Y}(n)= \left|p_n-p_\infty\right| 
=\left|\frac{V.P^n.W^t}{V.P^n.{\bf 1}^t} \,-\, \frac{V.G.W^t}{V.G.{\bf 1}^t}\right| 
= O\left(n^{d_2-1}.\left(\frac{|\lambda_2|}{\lambda_1}\right)^n\right).
\end{eqnarray*}

\vspace{.2cm}

{\bf If $P$ is not primitive}, without loss of generality we can relabel
the symbols on $A$ such that $P$ can be written in the Frobenius form
decomposed into primitive blocks as follows
  \begin{equation} \label{P1cicl}
P=\left(\begin{array}{ccccccccccccccccccccccccc}
{\bf 0} & B_{1} & {\bf 0} & \ldots & {\bf 0} \\
{\bf 0} & {\bf 0} & B_{2} & \ldots & {\bf 0} \\
\vdots & \vdots & \vdots & \ddots & \vdots \\
{\bf 0} & {\bf 0} & {\bf 0} & \ldots & B_{h-1} \\
B_{h} & {\bf 0} & {\bf 0} & \ldots & {\bf 0} 
\end{array} \right).
 \end{equation}
Therefore, $P^h$ will be a primitive matrix given by
 \begin{equation} \label{Phcicl}
P^h=\left(\begin{array}{ccccccccccccccccccccccccc}
B_{1}.B_{2}.\ldots.B_{h} & {\bf 0} & \ldots & {\bf 0} \\
{\bf 0} & B_{2}.B_{3}.\ldots.B_{h}.B_{1} & \ldots & {\bf 0} \\
\vdots & \vdots & \ddots & \vdots \\
{\bf 0} & {\bf 0} & \ldots & B_{h}.B_{1}.\ldots.B_{h-1} 
\end{array} \right)
\end{equation}
and we have 
$$\lim_{n\rightarrow\infty}\left(\frac{P^h}{\lambda_1^*}\right)^n=G^*$$
with $G^*=\phi.\psi^t>0$, and $\phi$ and $\psi$ are the right and left
Perron vectors corresponding to $\lambda_1$ from $P^h$,
respectively. Since,
$$p_{n.h+1}=\frac{V.P^{n.h}.W^t}{V.P^{n.h}.{\bf 1}^t}=\frac{V.\left(\frac{P^h}{\lambda_1^*}\right)^{n}.W^t}{V.\left(\frac{P^h}{\lambda_1^*}\right)^{n}.{\bf 1}^t}$$
and
$$p_{n.h+r+1}=\frac{V.P^{n.h+r}.W^t}{V.P^{n.h+r}.{\bf 1}^t}=\frac{V.\left(\frac{P^h}{\lambda_1^*}\right)^n.P^r.W^t}{V.\left(\frac{P^h}{\lambda_1^*}\right)^n.P^r.{\bf 1}^t},$$
leading to
\begin{equation} \label{eq:cr}
\lim_{n\rightarrow\infty}p_{n.h+r+1}=\lim_{n\rightarrow\infty}\frac{V.\left(\frac{P^h}{\lambda_1^*}\right)^{n}.P^r.W^t}{V.\left(\frac{P^h}{\lambda_1^*}\right)^{n}.P^r.{\bf 1}^t}=\frac{V.G^*.P^r.W^t}{V.G^*.P^r.{\bf 1}^t}.
\end{equation}
Therefore, in order to have continuity of the transtion probabilites, we need Expression (\ref{eq:cr}) to be constant for all $r=0, \ldots, h-1$. 
\end{proof}

In some cases, the aggregated process has finite memory, for example, it is
known that if the sum of all rows of $P$ are equal then the resulting
process is Markovian \cite{burk:rose:1958}. It is possible to obtain higher
order Markov processes as shown by the following proposition.

\begin{prop} \label{prop:markov}
 (a) The process $\Y$ is a Markov process with order at most $m-1$ if $P$
 have a single non-null eigenvalue. \\
(b) The process $\Y$ is a Markov process with order $k+1$ if $P$ have a single non-null eigenvalue and $k$ non-null rows in its triangular form.
\end{prop}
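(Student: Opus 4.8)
The plan is to reduce both assertions to an elementary statement about the scalar sequence $(p_k)_{k\ge0}$ introduced before Proposition~\ref{prop:preliminary}, and then to settle that statement with Cayley--Hamilton together with one classical property of nilpotent matrices. The key dictionary is the following. In the proof of Proposition~\ref{prop:preliminary} it was shown that, for the continuous version of the kernel, $P^{\Y}(1|\underline a)=p_{\ell(\underline a)}$ for every $\underline a\in\{0,1\}^{-\N}$, with $p_\infty:=\lim_kp_k$. Now, for any $r\ge1$, one has $\ell(\underline a)\ge r$ exactly when $a_{-r}^{-1}=0_{-r}^{-1}$, whereas when $a_{-r}^{-1}\neq 0_{-r}^{-1}$ the string $a_{-r}^{-1}$ already determines $\ell(\underline a)\ (<r)$. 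Hence, if $(p_j)$ is constant for all $j\ge r$, then $P^{\Y}(1|\underline a)$ depends on $\underline a$ only through $a_{-r}^{-1}$ (and so does $P^{\Y}(0|\cdot)=1-P^{\Y}(1|\cdot)$), i.e.\ $\Y$ is Markovian of order at most $r$. It therefore suffices to prove that hypothesis (a) forces $(p_j)$ to be constant for $j\ge m-1$, and hypothesis (b) forces it to be constant for $j\ge k+1$.

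For (a) I would use the matricial identity $p_{j+1}=VP^{j}W^t/(VP^{j}{\bf 1}^t)$, $j\ge0$, from~\eqref{transagreg}. Reading ``$P$ has a single non-null eigenvalue'' as ``the characteristic polynomial of the $(m-1)\times(m-1)$ matrix $P$ equals $x^{m-2}(x-\lambda)$ for some $\lambda\neq0$'', Cayley--Hamilton gives $P^{m-2}(P-\lambda I)=0$, whence $P^{j}=\lambda^{\,j-(m-2)}P^{m-2}$ for all $j\ge m-2$. Substituting into the ratio, the powers of $\lambda$ cancel and $p_{j+1}=VP^{m-2}W^t/(VP^{m-2}{\bf 1}^t)$ is independent of $j$ for $j\ge m-2$; should that denominator vanish, then $VP^{m-2}={\bf 0}$ (nonnegative entries, ${\bf 1}$ strictly positive), so every past with $\ell(\underline a)\ge m-1$ has probability zero and the conclusion is vacuous. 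Thus $(p_j)$ is constant for $j\ge m-1$, and $\Y$ is Markov of order at most $m-1$.

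For (b) I would sharpen the stabilisation index. Writing $P$ in triangular form, having only $k$ non-null rows gives $\mathrm{rank}(P)\le k$. Decompose $\R^{m-1}=E_\lambda\oplus E_0$ into the generalised eigenspaces for $\lambda$ and $0$; since $\lambda$ is simple, in a basis adapted to this $P$-invariant splitting $P$ is block-diagonal, $P=\mathrm{diag}(\lambda,N)$ with $N$ nilpotent of some index $\nu$, so $\mathrm{rank}(P)=1+\mathrm{rank}(N)$. Because the ranks $\mathrm{rank}(N),\mathrm{rank}(N^2),\dots$ decrease strictly until they first reach $0$ (at exponent $\nu$), one has $\mathrm{rank}(N)\ge\nu-1$, hence $\nu\le 1+\mathrm{rank}(N)=\mathrm{rank}(P)\le k$. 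Therefore $N^{k}=0$, i.e.\ the minimal polynomial of $P$ divides $x^{k}(x-\lambda)$, i.e.\ $P^{k+1}=\lambda P^{k}$, so $P^{j}=\lambda^{\,j-k}P^{k}$ for $j\ge k$. Exactly as in the previous paragraph this makes $p_{j+1}$ independent of $j$ for $j\ge k$, i.e.\ $(p_j)$ constant for $j\ge k+1$, so $\Y$ is Markov of order at most $k+1$.

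I expect the only genuinely non-routine step to be the bound $\nu\le\mathrm{rank}(P)$ in part (b)---equivalently, $\mathrm{rank}(N)\ge\nu-1$ for a nilpotent matrix of index $\nu$---for which I would insert the one-line argument based on the strictly decreasing rank sequence and the fact that it cannot stabilise before reaching $0$. All the rest (Cayley--Hamilton, the cancellation of the $\lambda$-powers in~\eqref{transagreg}, the treatment of vanishing denominators, and the translation from ``$(p_j)$ eventually constant from index $r$'' to ``Markov of order at most $r$'') is bookkeeping.
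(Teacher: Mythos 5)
Your proposal is correct, and it follows the paper's overall strategy — reduce both claims to the eventual constancy of the sequence $p_n$ through the matrix formula \eqref{transagreg} — but the linear-algebra ingredients are genuinely different. For (a), the paper invokes Seneta's Theorem 1.2 to get $P^n=\lambda_1^{n}\phi\psi^t$ for all $n\ge m-2$, whereas you obtain the equivalent proportionality $P^{n}=\lambda^{\,n-(m-2)}P^{m-2}$ directly from Cayley--Hamilton; both give constancy of $p_{n+1}$ for $n\ge m-2$ and hence order at most $m-1$. For (b), the paper passes to the real Schur form $P=Q\Delta Q^t$ and asserts, as ``easy to see'', that $k$ non-null rows of $\Delta$ force $\Delta^{n}=\lambda_1^{\,n-k}\Delta^{k}$ for $n\ge k$; you instead observe that $k$ non-null rows give $\mathrm{rank}(P)\le k$, split off the simple eigenvalue $\lambda$ from the nilpotent part $N$, and bound the nilpotency index by $1+\mathrm{rank}(N)=\mathrm{rank}(P)\le k$ via the strictly decreasing rank sequence, yielding $P^{k+1}=\lambda P^{k}$. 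This is a real gain in rigor: your rank argument supplies the proof of exactly the step the paper leaves unjustified, and it bypasses the Schur factorization altogether. You also make explicit two points the paper leaves implicit: the translation from ``$p_j$ constant for $j\ge r$'' to ``Markov of order at most $r$'' through the identity $P^{\Y}(1|\underline{a})=p_{\ell(\underline{a})}$ from Proposition \ref{prop:preliminary}, and the handling of a vanishing denominator $VP^{m-2}{\bf 1}^t=0$, which concerns only a null set of pasts. Two small caveats, shared with the paper: both arguments prove order \emph{at most} $k+1$ (resp.\ $m-1$), not exactly that order, and your reading of ``single non-null eigenvalue'' as a simple Perron root is legitimate only because this proposition sits in the irreducible case, where simplicity follows from Perron--Frobenius.
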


\begin{proof}
Under the conditions of the theorem, then $\lambda_1$ is the only
non-null eigenvalue and by Theorem 1.2, p.9 (Seneta, 2006) we have
 $$P^n=\lambda_1^{n}.\phi.\psi^t$$
for all $n\geq m-2$. Therefore, for any $n \ge m-2$ 
$$p_{n+1}=\frac{V.P^n.W^t}{V.P^n.{\bf 1}^t}=\frac{V.\lambda_1^n.\phi.\psi^t.W^t}{V.\lambda_1^n.\phi.\psi^t.{\bf 1}^t}=\frac{V.G.W^t}{V.G.{\bf 1}^t}.$$
    
Moreover, according to Schur’s Triangularization Theorem, if all eigenvlaues
of $P$ are real then $P$ can be triangularized, that is, there exist
$Q$ and $\Delta$, such that
  $$P=Q.\Delta.Q^t \quad \mbox{ and } \quad P^n=Q.\Delta^n.Q^t $$
where $\Delta$ is triangular superior and $Q$ is orthogonal. Also,
$\Delta$ is composed by the eigenvalues in its diagonal. Suppose that
$\Delta$ has $k$ non-null rows. Then, it is easy to see that for $n
\ge k$
  $$\Delta^n=\lambda_1^{n-k}.\Delta^k.$$
Now, for $n \ge k$
\begin{eqnarray*}
p_{n+1}&=&  \frac{V.Q.\Delta^{n}.Q^t.W^t}{V.Q.\Delta^{n}.Q^t.{\bf 1}^t} \\
&=& \frac{\lambda_1^{n-k}.V.Q.\Delta^{k}.Q^t.W^t}{\lambda_1^{n-k}.V.Q.\Delta^{k}.Q^t.{\bf 1}^t} \\
&=&  \frac{V.Q.\Delta^{k}.Q^t.W^t}{V.Q.\Delta^{k}.Q^t.{\bf 1}^t} \\ 
&=& p_{k+1}.
\end{eqnarray*}
Therefore the process $\Y$ is an order $k+1$ Markov chain.
\end{proof}

Notice that this Proposition does not cover the usual order 1 Markov
case since $\Delta$ cannot have zero non-null rows (the second largest
eigenvalue is equal to zero, therefore at least one row of $\Delta$ is
equal to zero).

\subsection{The reducible case}

Without loss of generality consider that the transition submatriz $P$ can be written as 
\begin{equation} \label{canored}
   P=\left(\begin{array}{ccccccccccccccccccccccccc}
T_{11} & T_{12} & T_{13} & \ldots & T_{1c} \\
{\bf 0} & T_{22} & {\bf 0} & \ldots & {\bf 0} \\
{\bf 0} & {\bf 0} & T_{33} & \ldots & {\bf 0} \\
\vdots & \vdots & \vdots & \ddots & \vdots \\
{\bf 0} & {\bf 0} & {\bf 0} & \ldots & T_{cc} 
\end{array} \right).
\end{equation}
where the submatrices $T_{22},T_{33},\ldots,T_{cc}$ are irreducible.

The reducible case is harder to analyse in generality since the
matrices $T_{22},T_{33},\ldots,T_{cc}$ can be periodic with different
periods. However, this situation can be studied case by case using the same
tools as described before. 

There are some special cases, where we can get sufficient conditions
for contuinuity. 

Let $\lambda_{1,i}$ be the Perron value of $T_{ii}$ and define
\begin{equation} 
\label{eq:l1}
\lambda_{\max}=\max_{i} \lambda_{1,i}.
\end{equation}

\begin{defi} \label{Bdominante}
 The matrices $T_{ii},i\in\{2,3,...,c\}$ with $\lambda_{i} = \lambda_{\max}$
are called  {\rm Dominating Blocks}.
\end{defi}

Let
\begin{equation}
\label{eq:Delta}
\Delta = \{i\in\{2, \ldots, c\}: T_{ii} \mbox{ is a Dominating block}\}.
\end{equation}

\begin{theo}  \label{thm:2a}
If the matrix $P$ is reducible and written as (\ref{canored}),  then
the process $\Y$ is continuous if all Dominating Blocks ($T_{ii}$ for $i \in \Delta$) are
\begin{enumerate}[i]
\item either irreducible; or
\item are periodic with the same period $h$ and for all  $i \in \Delta$
 \begin{equation} \label{Condciclred}
 \frac{V_{i} G_{i} (T_{ii})^r W_{i}^t}{V_i G_i (T_{ii})^r {\bf 1}_{i}^t}=\frac{V_{i} G_{i} W_{i}^t}{V_i G_i {\bf 1}_{i}^t},
 \end{equation}
for all $r = 0, \ldots, h-1$ with $G_{i}= \phi_i \psi_i^t>0$, and $\phi_i$ and $\psi_i$ are the right and left Perron vectors corresponding to $\lambda_{1,i}$ from $T_{ii}^h$.
\end{enumerate}
\end{theo}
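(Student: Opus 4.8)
The plan is to reduce continuity of $\Y$ to the existence of $\lim_k p_k$ (Proposition~\ref{prop:preliminary}), to rewrite $p_{k+1}=VP^kW^t/(VP^k{\bf 1}^t)$ via \eqref{transagreg}, and then to use the block shape \eqref{canored} so that the analysis of each dominating block becomes exactly that of Theorem~\ref{thm:1}. Decompose $V=(V_1,\ldots,V_c)$, $W=(W_1,\ldots,W_c)$ and ${\bf 1}=({\bf 1}_1,\ldots,{\bf 1}_c)$ conformably with \eqref{canored}. Because $T_{22},\ldots,T_{cc}$ are closed under $P$, the $(j,j)$ block of $P^k$ is $T_{jj}^k$ ($j\ge2$), the $(1,1)$ block is $T_{11}^k$, the $(1,j)$ block is the convolution $C_j^{(k)}:=\sum_{\ell=0}^{k-1}T_{11}^{\ell}T_{1j}T_{jj}^{k-1-\ell}$, and all remaining blocks vanish; hence
\[
VP^kW^t=V_1T_{11}^kW_1^t+\sum_{j=2}^{c}\bigl(V_1C_j^{(k)}W_j^t+V_jT_{jj}^kW_j^t\bigr),
\]
and the same identity, with $W^t$ replaced by ${\bf 1}^t$, for the denominator.

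Next I would rescale by $\lambda_{\max}^k$ and show that only the dominating blocks matter. For a non-dominating block $T_{jj}^k/\lambda_{\max}^k\to 0$; assuming (the main case) $\rho(T_{11})<\lambda_{\max}$ one also has $T_{11}^k/\lambda_{\max}^k\to 0$, and a geometric summation gives $C_j^{(k)}/\lambda_{\max}^k\to 0$ for non-dominating $j$ and, for a dominating aperiodic (hence primitive) block $j$, $C_j^{(k)}/\lambda_{\max}^k\to\lambda_{\max}^{-1}(I-\lambda_{\max}^{-1}T_{11})^{-1}T_{1j}G_j$, with $G_j$ the rank-one Perron projector of $T_{jj}$. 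Hence, under hypothesis~(i), both $VP^kW^t/\lambda_{\max}^k$ and $VP^k{\bf 1}^t/\lambda_{\max}^k$ converge; checking that the denominator limit is strictly positive --- using $V\neq{\bf 0}$, $W\neq{\bf 0}$, and that irreducibility of $P^{\X}$ makes $V$ feed mass into, and $W$ be fed by, each dominating block --- gives convergence of $p_{k+1}$, i.e.\ continuity of $\Y$.

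Under hypothesis~(ii) I would run the same computation along residue classes $k\equiv r\pmod{h}$. For a dominating (periodic) block $j$ one gets $T_{jj}^k/\lambda_{\max}^k\to\lambda_{\max}^{-r}G_jT_{jj}^{r}$, which, up to the scalar $\lambda_{\max}^{-r}$, is the matrix $G_jT_{jj}^{r}$ appearing in \eqref{Condciclred}, together with an analogous periodic limit for $C_j^{(k)}/\lambda_{\max}^k$. By the manipulation already carried out in the proof of Theorem~\ref{thm:1}(2), the block-$j$ contribution to the limit of $p_{k+1}$ along the class $r$ is independent of $r$ precisely when \eqref{Condciclred} holds for that block; imposing \eqref{Condciclred} for every $i\in\Delta$ then forces the limit of $p_{k+1}$ to be the same along every residue class, so $p_k$ converges and $\Y$ is continuous.

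The main obstacle I anticipate is the bookkeeping required to separate the contributions of the several dominating blocks and of the transient block: one must bound the convolutions $C_j^{(k)}$ uniformly, deal with the borderline case $\rho(T_{11})=\lambda_{\max}$ (and the case $\rho(T_{11})>\lambda_{\max}$, in which $T_{11}$ itself governs the growth and must be analysed as in Theorem~\ref{thm:1}), and guarantee that the denominator does not decay strictly faster than the numerator. In the periodic case one must moreover check that, with \eqref{Condciclred} imposed blockwise, the \emph{aggregated} numerator and denominator still have an $r$-independent ratio and that the cross terms $V_1C_j^{(k)}W_j^t$ introduce no further periodic obstruction; this is immediate when a single dominating block is present and, otherwise, is the point demanding the most care.
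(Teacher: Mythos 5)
Your proposal is correct in approach and essentially coincides with the paper's, whose entire proof of Theorem \ref{thm:2a} is the single sentence ``Analogous to Theorem \ref{thm:1}'': decomposing $V$, $W$, ${\bf 1}$ conformably with \eqref{canored} and applying the Perron--Frobenius limits of Theorem \ref{thm:1} blockwise to the dominating blocks is exactly the intended argument, which you spell out in considerably more detail than the paper does. The caveats you flag at the end (the transient block when $\rho(T_{11})\ge\lambda_{\max}$, strict positivity of the limiting denominator, and the $r$-dependent mixing of the contributions of several periodic dominating blocks) are genuine delicate points, but the paper's one-line proof does not address them either.
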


\begin{proof}
Analogous to Theorem \ref{thm:1}. 
\end{proof}

\subsection{Denumerable alphabet}

The theory of Perron-Frobenius can be used for countable, real,
non-negative matrices. The definitions of irreducibility and period
extend easily to such matrices. Moreover, we can define the
Perron-value of a denumerable matrix using the following lemma.

\begin{lemma}[Lemma 7.1.1, \citeasnoun{kitch:1998}] Let $P$ be a countable, real, non-negative, irreducible and aperiodic matrix. For a fixed state $i$:
\\ (i) there exists a $k$ such that $(P^n)_{ii} > 0$ for all $n \ge
k$; \\ (ii) $(P^{n+m})_{ii} \ge (P^n)_{ii} \cdot (P^m)_{ii}$;\\ (iii)
$\lim_{n \rightarrow \infty} \sqrt{n}{(P^n)_{ii}}$ exists and equals
$\sup_n \sqrt{n}{(P^n)_{ii}}$; \\ (iv) if $\lambda = \lim_{n
  \rightarrow \infty} \sqrt{n}{(P^n)_{ii}}$ then $(P^n)_{ii}/\lambda^n
\le 1$ for all $n$.
\end{lemma}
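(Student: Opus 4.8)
The plan is to prove the four parts in the order (ii), (i), (iii), (iv), since each step feeds the next. Part (ii) is immediate: expanding the matrix product over the countable index set, $(P^{n+m})_{ii}=\sum_{j\in A}(P^n)_{ij}(P^m)_{ji}\ge (P^n)_{ii}(P^m)_{ii}$, because every summand is non-negative and one simply discards all but the $j=i$ term. In particular the set $S:=\{n\ge1:(P^n)_{ii}>0\}$ is closed under addition.

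For part (i), I would observe that $S\ne\emptyset$ by irreducibility (there is at least one loop at $i$) and that, by part (ii), $S$ is a sub-semigroup of $(\mathbb{N},+)$; aperiodicity is exactly the statement that $\gcd(S)=1$. The classical elementary fact that an additively closed subset of $\mathbb{N}$ with gcd $1$ contains all sufficiently large integers then produces a $k$ with $(P^n)_{ii}>0$ for every $n\ge k$.

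For part (iii), write $c_n:=(P^n)_{ii}$ and set $\lambda:=\sup_{n\ge1}c_n^{1/n}\in(0,+\infty]$ (positive since $S\ne\emptyset$). Then $\limsup_n c_n^{1/n}\le\lambda$ is trivial, and for the matching lower bound I would run a Fekete-type argument that uses part (i) to sidestep the zeros: fix $m\ge k$; for large $n$ write $n=qm+s$ with $q=\lfloor(n-k)/m\rfloor$, so that $s\in\{k,\dots,k+m-1\}$, and apply part (ii) repeatedly to get $c_n\ge c_m^{\,q}c_s\ge c_m^{\,q}\delta$ with $\delta:=\min_{k\le s<k+m}c_s>0$; since $q/n\to 1/m$ this yields $\liminf_n c_n^{1/n}\ge c_m^{1/m}$. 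Taking the supremum over $m\ge k$, and noting that supermultiplicativity ($c_{jm}\ge c_m^{\,j}$) forces $\sup_{m\ge k}c_m^{1/m}=\sup_{m\ge1}c_m^{1/m}=\lambda$, gives $\lim_n c_n^{1/n}=\lambda$. Part (iv) is then immediate: the sup-characterisation in (iii) gives $c_n^{1/n}\le\lambda$ for every $n$, hence $c_n/\lambda^n\le1$.

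The only genuine subtlety — and the reason the parts must be proved in this order — is the presence of the zero entries $c_n=0$ for small $n$: without part (i) the remainder $s$ in the decomposition $n=qm+s$ could fall on an index with $c_s=0$, and the supermultiplicativity estimate would collapse. Everything else is bookkeeping. In the denumerable setting one should also keep in mind that $\lambda$ may a priori be $+\infty$; the statements above remain correct (if vacuous) in that case, and $\lambda$ is the Perron value to be used in the sequel.
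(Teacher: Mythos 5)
Your proof is correct, but note that there is no in-paper argument to compare it against: the paper quotes this statement verbatim as Lemma 7.1.1 of Kitchens (1998) and gives no proof, using it only as input to the generalized Perron--Frobenius theorem. Your route is the standard one (and essentially the textbook proof): (ii) by restricting the Chapman--Kolmogorov sum to the term $j=i$, (i) from the fact that $S=\{n:(P^n)_{ii}>0\}$ is a nonempty additive semigroup with $\gcd(S)=1$ by aperiodicity, (iii) by the Fekete-type supermultiplicative argument with the remainder forced into $\{k,\dots,k+m-1\}$ so that $\delta>0$, and (iv) from the identification of the limit with $\sup_n (P^n)_{ii}^{1/n}$; your reading of the statement's $\sqrt{n}{(P^n)_{ii}}$ as the $n$-th root $(P^n)_{ii}^{1/n}$ is the intended one (a typo in the paper), and your closing remarks about the possible value $\lambda=+\infty$ in the denumerable case match the paper's standing assumption that $\lambda$ is finite.
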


Then, $\lambda$ is the {\it Perron value} of $P$. It can be
infinite. For this section, we will assume it is finite. However, this
is not enough to have the desired results in terms of continuity. We
will need positive recurrence. This concept is well understood for
stochastic matrices in terms of the time to return to the
states. However, for general non-negative matrices the following
definitions are necessary.  Let
$$t_{ij}(0) = \delta_{ij}, \quad t_{ij}(1) = P_{ij} \quad \mbox{ and }\quad
t_{ij}(n) = (P^n)_{ij}.$$
Define the generating function as
\begin{equation}
\label{eq:gf1}
T_{ij}(z) = \sum_{n=0}^{\infty} t_{ij}(n) z^n.
\end{equation}
Now, let
$$\ell_{ij}(0) = 0, \quad \ell_{ij}(1) = P_{ij} \quad \mbox{ and }
\ell_{ij}(n+1) = \sum_{r \neq i} \ell_{ir}(n) t_{rj}.$$
%
Observe that the radius of convergence of $T_{ij}(z)$ is $1/\lambda$
for all $i,j$. 

\begin{defi}
The irreducible matrix $P$ corresponding to the generating function (\ref{eq:gf1})
is: \\
(i) {\rm recurrent} if $T_{ii}(1/\lambda) = \infty$; \\
(ii) {\rm transient} if $T_{ii}(1/\lambda) < \infty$; \\
(iii) {\rm positive recurrent} if $T_{ii}(1/\lambda) = \infty$ and
$\sum_{n} n \ell_{ii}(n)/\lambda^n < \infty$; \\
(iv) {\rm null recurrent} if $T_{ii}(1/\lambda) = \infty$ and
$\sum_{n} n \ell_{ii}(n)/\lambda^n = \infty$. 
\end{defi}
Notice that the definition is independent of the choice of $i$. 

The Generalized Perron-Frobenius Theorem states the following. 

\begin{theo*}[Theorem 7.1.3, \citeasnoun{kitch:1998}]
Suppose $P$ is a countable, non-negative, irreducible, aperiodic and positive
recurrent matrix. Then, there exists a Perron value $\lambda > 0$
(assumed to be finite) such that $\lim_{n \rightarrow \infty}
P^n/\lambda^n = \phi.\psi^t>0$, and $\phi$ and $\psi$ are the Perron
vectors corresponding to $\lambda$ from $P$ and $P^t$, respectively.
\end{theo*}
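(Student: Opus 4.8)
\medskip

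\noindent\emph{Proof proposal.} The plan is to reduce this classical statement to the ordinary ergodic theorem for countable, irreducible, aperiodic, positive recurrent \textbf{stochastic} matrices, via a diagonal change of variables turning $P/\lambda$ into a genuine transition matrix. The substantive first step is to produce strictly positive vectors $\phi$ and $\psi$ with $P\phi^{t}=\lambda\phi^{t}$ and $\psi P=\lambda\psi$, unique up to scalar multiples. This rests on the $R$-recurrence theory developed in Chapter~7 of \citeasnoun{kitch:1998}: all the generating functions $T_{ij}(z)$ share radius of convergence $1/\lambda$, and recurrence ($T_{ii}(1/\lambda)=\infty$) forces the a priori merely $\lambda$-subinvariant vectors extracted from the normalised iterates $(P^{n})_{\cdot j}/\lambda^{n}$ and $(P^{n})_{i\cdot}/\lambda^{n}$ (bounded by $1$ thanks to Lemma~7.1.1(iv)) to be genuinely $\lambda$-invariant, with irreducibility giving strict positivity and uniqueness. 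In the finite-alphabet case this is just ordinary Perron--Frobenius.

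Given such $\phi$, set $\widetilde P_{ij}:=P_{ij}\,\phi_{j}/(\lambda\,\phi_{i})$. The eigenvector identity yields $\sum_{j}\widetilde P_{ij}=1$, so $\widetilde P$ is stochastic, and $(\widetilde P^{\,n})_{ij}=(P^{n})_{ij}\,\phi_{j}/(\lambda^{n}\phi_{i})$ by telescoping. Irreducibility and aperiodicity transfer to $\widetilde P$ verbatim, since the $\phi$-factors do not alter which entries of the powers vanish. Moreover the first-return mass $\widetilde f_{ii}(n)$ of $\widetilde P$ at $i$ in $n$ steps equals $\ell_{ii}(n)/\lambda^{n}$ (the $\phi$-factors telescope around a closed loop), so $\sum_{n}\widetilde f_{ii}(n)=\sum_{n}\ell_{ii}(n)/\lambda^{n}$, which equals $1$ exactly when $P$ is recurrent (by the renewal identity $T_{ii}(z)=1/(1-\sum_{n}\ell_{ii}(n)z^{n})$), while the mean return time $\sum_{n}n\,\widetilde f_{ii}(n)=\sum_{n}n\,\ell_{ii}(n)/\lambda^{n}$ is finite exactly under the positive-recurrence hypothesis on $P$. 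Thus $\widetilde P$ is irreducible, aperiodic and positive recurrent, and a short computation identifies its unique stationary probability vector as $\pi_{j}=\phi_{j}\psi_{j}/Z$ with $Z:=\sum_{k}\phi_{k}\psi_{k}<\infty$.

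Now the classical convergence theorem for such chains --- equivalently, the Erd\H{o}s--Feller--Pollard renewal theorem applied to the return times at each state --- gives $(\widetilde P^{\,n})_{ij}\to\pi_{j}$ for all $i,j$. Undoing the change of variables,
\[
\frac{(P^{n})_{ij}}{\lambda^{n}}=(\widetilde P^{\,n})_{ij}\,\frac{\phi_{i}}{\phi_{j}}\;\xrightarrow[n\to\infty]{}\;\pi_{j}\,\frac{\phi_{i}}{\phi_{j}}=\frac{\phi_{i}\,\psi_{j}}{Z},
\]
a strictly positive rank-one matrix; after rescaling $\phi,\psi$ so that $Z=1$ this is exactly the claimed limit $\phi\cdot\psi^{t}>0$.

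I expect the main obstacle to be the first step: securing the strictly positive left and right $\lambda$-eigenvectors of the infinite matrix and their essential uniqueness, together with the bookkeeping that matches the analytic positive-recurrence condition with ordinary positive recurrence of the conjugated chain. Everything after that is routine. A self-contained alternative avoiding the conjugation would start from the renewal equation $(P^{n})_{ii}/\lambda^{n}=\sum_{k=1}^{n}(\ell_{ii}(k)/\lambda^{k})\,(P^{n-k})_{ii}/\lambda^{n-k}$ (valid for $n\ge1$), deduce $(P^{n})_{ii}/\lambda^{n}\to 1/\mu_{i}$ from the renewal theorem, and then propagate to off-diagonal entries through first-passage decompositions --- but that route makes the rank-one product structure less transparent.
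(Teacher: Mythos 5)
This statement is not proved in the paper at all: it is quoted verbatim as Theorem 7.1.3 of \citeasnoun{kitch:1998} and used as a black box to deduce Theorem \ref{thm:1a}, so there is no internal argument to compare yours against. On its own terms, your proposal is the standard textbook proof of the generalized Perron--Frobenius theorem and its architecture is sound: the Doob (h-)transform $\widetilde P_{ij}=P_{ij}\phi_j/(\lambda\phi_i)$ is stochastic, inherits irreducibility and aperiodicity, its first-return weights are exactly $\ell_{ii}(n)/\lambda^n$ by telescoping, so the paper's analytic recurrence and positive-recurrence conditions translate into ordinary recurrence and finite mean return time for $\widetilde P$; the Erd\H{o}s--Feller--Pollard ergodic theorem then gives $(\widetilde P^n)_{ij}\to\pi_j$, and undoing the conjugation yields $(P^n)_{ij}/\lambda^n\to\phi_i\psi_j/Z$. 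Two points deserve tightening. First, your sketched construction of the positive eigenvectors as limits of the normalised iterates $(P^n)_{\cdot j}/\lambda^n$ is shaky: those subsequential limits can be identically zero a priori (they are zero in the null-recurrent case), so positivity of such a limit is essentially the conclusion you are trying to prove; the standard route (Vere-Jones, and Chapter 7 of Kitchens, to which you rightly defer) builds the sub/invariant vectors from the taboo and first-passage generating functions evaluated at the radius $1/\lambda$ --- precisely the quantities $\ell_{ij}(n)$, $T_{ij}(z)$ the paper introduces --- where recurrence upgrades subinvariance to invariance and irreducibility gives strict positivity and uniqueness. Second, the finiteness of $Z=\sum_k\phi_k\psi_k$ is not a free ``short computation'': the clean justification is that $\phi_j\psi_j$ is an invariant measure for the recurrent chain $\widetilde P$, hence proportional to its unique stationary probability, which forces summability; stated this way your identification $\pi_j=\phi_j\psi_j/Z$ is complete. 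With those two repairs the argument is correct and is, as far as one can tell, the same in spirit as the source the paper cites.
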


Therefore, we have the following result for countable Markov chains.
\begin{theo} \label{thm:1a}
If the matrix $P$ is countable, non-negative, irreducible, aperiodic
and positive recurrent matrix, then the process $\Y$ is continuous.
\end{theo}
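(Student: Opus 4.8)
The plan is to mimic the primitive case of Theorem \ref{thm:1}, replacing the finite-dimensional Perron--Frobenius statement by the Generalized Perron--Frobenius Theorem (Theorem 7.1.3 of \citeasnoun{kitch:1998}) quoted just above. By Proposition \ref{prop:preliminary}, it suffices to show that the sequence $(p_k)_{k\ge1}$ converges, since then $\beta^{\bf Y}(k)=\sup_{l,m\ge k}|p_l-p_m|\to 0$ and $P^{\bf Y}$ is continuous. Recall from \eqref{transagreg} that $p_{k+1}=VP^kW^t/(VP^k{\bf 1}^t)$, where now $P$ is the countable, non-negative, irreducible, aperiodic, positive recurrent submatrix of $P^{\bf X}$, and $V,W$ are the (now possibly infinite) vectors $(P_{1j})_{j\ge2}$ and $(P_{j1})_{j\ge2}$. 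Note that $V\ne{\bf 0}$ and $W\ne{\bf 0}$ by the standing assumption, and $\lambda$ is finite by hypothesis.

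First I would divide numerator and denominator of $p_{k+1}$ by $\lambda^k$, writing
\[
p_{k+1}=\frac{V\,(P/\lambda)^k\,W^t}{V\,(P/\lambda)^k\,{\bf 1}^t}.
\]
By the Generalized Perron--Frobenius Theorem, $(P/\lambda)^k\to\phi\,\psi^t$ entrywise, with $\phi,\psi$ the strictly positive right and left Perron vectors. If one could pass the limit inside the vector--matrix--vector products, the numerator would tend to $(V\phi^t)(\psi W^t)$ and the denominator to $(V\phi^t)(\psi {\bf 1}^t)$, giving
\[
p_\infty=\lim_k p_{k+1}=\frac{(V\phi^t)(\psi W^t)}{(V\phi^t)(\psi {\bf 1}^t)}=\frac{\psi W^t}{\psi{\bf 1}^t},
\]
which is a well-defined element of $(0,1)$: the denominator $\psi{\bf 1}^t=\sum_{j\ge2}\psi_j$ is finite because positive recurrence of $P$ is equivalent to summability of the stationary-type vector (and it is strictly positive since $\psi>0$), and $0<\psi W^t<\psi{\bf 1}^t$ since $W\ne{\bf 0}$ and $W\le{\bf 1}$ with at least one strict inequality coordinate where $V\cdot$ mass sits. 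This convergence then yields continuity of $P^{\bf Y}$ exactly as in the proof of Proposition \ref{prop:preliminary}, since $P^{\bf Y}(1|\underline a)=p_{\ell(\underline a)}$ for $\underline a\ne\ldots00$ and we set $P^{\bf Y}(1|\ldots00)=p_\infty$.

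The main obstacle is justifying the interchange of limit and the (possibly infinite) summations defining $V(P/\lambda)^kW^t$, $V(P/\lambda)^k{\bf 1}^t$, since entrywise convergence of matrices does not in general allow passing to the limit in an infinite bilinear form. I would handle this via dominated convergence: part (iv) of Lemma 7.1.1 (quoted above) gives the uniform bound $(P^k)_{ij}/\lambda^k\le 1$, but that alone is not summable against $V\otimes{\bf 1}$ or $V\otimes W$; what is needed is a summable dominating function. The clean route is to fix a reference state, say state $2$ (relabeling if necessary so that $V_2>0$, which is possible since $V\ne{\bf 0}$), and to use the renewal/last-exit decomposition of $\ell_{ij}$ together with positive recurrence to get bounds of the form $(P^k)_{ij}/\lambda^k\le C\,\psi_j$ (or $\le C\,u_i v_j$ with $\sum_i V_i u_i<\infty$ and $\sum_j v_j<\infty$), uniformly in $k$; such ``$\lambda$-recurrence'' ratio bounds are standard in Vere-Jones' theory and follow from the arguments behind Theorem 7.1.3. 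With a uniform summable dominant in hand, dominated convergence applies termwise to both numerator and denominator and the computation above goes through. An alternative, if one prefers to avoid reproving ratio bounds, is to note that $P/\lambda$ is a $\lambda$-recurrent kernel so $h_i:=\phi_i$ is $(P/\lambda)$-superharmonic, perform an $h$-transform to reduce to an honest positive recurrent stochastic matrix $\widehat P_{ij}=(P_{ij}/\lambda)\phi_j/\phi_i$, and translate the convergence of $p_{k+1}$ into convergence of a ratio of $\widehat P$-expectations, where the classical ergodic theorem for positive recurrent chains gives the limit directly; I expect the dominated-convergence argument to be the one actually written, with the $h$-transform mentioned as a remark.
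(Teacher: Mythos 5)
Your core argument is exactly the paper's: the paper proves Theorem \ref{thm:1a} by simply invoking the Generalized Perron--Frobenius Theorem (Theorem 7.1.3 of Kitchens) to get $P^n/\lambda^n\to\phi\psi^t>0$ and then repeating verbatim the computation of the primitive case of Theorem \ref{thm:1}, i.e.\ passing the limit inside $p_{n+1}=V(P/\lambda)^nW^t/\bigl(V(P/\lambda)^n{\bf 1}^t\bigr)$; no further justification is given. So your first two paragraphs coincide with the paper's proof, and your third paragraph goes beyond it: you correctly point out that entrywise convergence of $(P/\lambda)^n$ does not by itself justify the interchange of limit and the infinite sums, a point the paper silently glosses over. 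Be aware, though, that your sketched repair is not yet sound as stated: $R$-positive recurrence in the Vere--Jones sense gives $\sum_j\phi_j\psi_j<\infty$ (the normalization needed for $\lim P^n/\lambda^n=\phi\psi^t$), \emph{not} $\sum_j\psi_j<\infty$; summability of the left Perron vector is an extra condition (it is, for instance, what is needed for a quasi-stationary distribution to exist), so the dominating bound $(P^n)_{ij}/\lambda^n\le C\psi_j$ is not automatically summable against ${\bf 1}$, and the limit formula $p_\infty=\psi W^t/\psi{\bf 1}^t$ may not even be well defined without it. In other words, the genuine gap you identified is real, but your proposed dominated-convergence fix (and likewise the $h$-transform variant) needs either the additional hypothesis $\psi{\bf 1}^t<\infty$ or a finer ratio-limit argument; the paper itself provides neither, so relative to the paper your proposal is the same route, stated with more honesty about what remains to be checked.
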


\section{Discussion}\label{sec:discussion}

\subsection{Examples}

\begin{example}{\bf All entries positive -- comparing with Harris' rate of convergence.}
Consider the Markov chain ${\bf X}$ with alphabet $A = \{1,2,3\}$ and
transition matrix given by
$$P^{\bf X} = \left(\begin{array}{lll}
                  0.10 &  0.3 & 0.60 \\
                  0.20 &  0.3 & 0.50 \\
                  0.05 &  0.7 & 0.25 
                  \end{array}
             \right).
$$
\end{example}
Notice that the process ${\bf Y}$ is not Markov since $W$ is not
constant. In this case, it is easy to compute the eigenvalues and
eigenvectors of $P$ and $P'$ corresponding to the largest eigenvalue
obtaining that $p_\infty=\lim_{n\rightarrow\infty}p_n=0.132864$ and
the rate of convergence 
 $$\beta^{\Y}(n) \,=\,
O\left(\left(n \frac{|\lambda_2|}{\lambda_1}\right)^n\right)= O\left(n \, 
  0.3657281^n\right).$$

Notice that all entries of $P^{\bf X}$ are positive and the continuity
rate given in \citeasnoun{harris:1955} (Eq. (6.4)) gives a bound for
continuity rate as
$$\beta^{\Y}(n) \le (1 - \lambda)^{n-1}$$
where
$$ \lambda = \min_{i,j,k,l} \frac{p_{kj}p_{il}}{|A|^2 p_{ij} p_{kl}}.$$
In this case, $\lambda = 0.01190476$ and 
$$\beta^{\Y}(n) \le (0.9880952)^{n-1}.$$

\begin{example}{\bf $P$ is reducible and ${\bf Y}$ is continuous.}
Consider the Markov chain ${\bf X}$ with  alphabet $A = \{1,2,\ldots,7\}$ and transition matrix given by 
$$P^{\bf X} = \left(\begin{array}{ccccccc}
                  0.2 &  0.3 & 0.1 & 0.2 & 0.1 & 0   & 0.1 \\
                  0.2 &  0.3 & 0.2 & 0.1 & 0   & 0.1 & 0.1 \\
                  0.1 &  0.1 & 0.2 & 0   & 0.3 & 0.1 & 0.2 \\
                  0   &  0   & 0   & 0.4 & 0.6 & 0   & 0   \\
                  0.3 &  0   & 0   & 0.2 & 0.5 & 0   & 0   \\
                  0.7 &  0   & 0   & 0   & 0   & 0.3 & 0   \\
                  0.6 &  0   & 0   & 0   & 0   & 0   & 0.4
                  \end{array}
             \right).
$$
\end{example}
Since we have $P$ to be reducible we have to find the dominant blocks
and the maximum eigenvalue of the dominant blocks. In this case, the
dominant block is 
$$T_{22} = \left(\begin{array}{ll}
                   0.4 & 0.6    \\
                   0.2 & 0.5   
                  \end{array}
             \right).
$$
which has eigenvalues $0.8$ and $0.1$ and left and right normalized
eigenvectors $\psi = (0.5150787, 1.0301574)'$ and
$\phi=(0.8320503,0.5547002)'$ respectively yielding 
$p_\infty=\lim_{n\rightarrow\infty}p_n=0.2$ and the rate of convergence is
 $$\beta^{\Y}(n) \,=\,
O\left(\left(n \frac{|\lambda_2|}{\lambda_1}\right)^n\right)= O\left(n
  (1/8)^n\right).$$

\begin{example}{\bf $P$ is periodic with constant sum on its columns.}
Consider the Markov chain ${\bf X}$ with  alphabet $A =
\{1,2,\ldots,m\}$ and transition matrix such that
  \begin{equation} \label{P1cicl}
P=\left(\begin{array}{ccccc}
{\bf 0} & B_{1} & {\bf 0} & \ldots & {\bf 0} \\
{\bf 0} & {\bf 0} & B_{2} & \ldots & {\bf 0} \\
\vdots & \vdots & \vdots & \ddots & \vdots \\
{\bf 0} & {\bf 0} & {\bf 0} & \ldots & B_{h-1} \\
B_{h} & {\bf 0} & {\bf 0} & \ldots & {\bf 0} 
\end{array} \right)
 \end{equation}
with $\sum_i P_{ij} = k$ for all $j$. 
\end{example}

Notice that, in this case, $k$ is the largest eigenvalue of $P$ and
${\bf 1}$ is a left eigenvector of $P^r$, for all $r$. Therefore,
Expression (\ref{eq:cr}) can be written as
\begin{equation} \label{eq:cr1}
\frac{V.G^*.P^r.W^t}{V.G^*.P^r.{\bf 1}^t}= \frac{V.\phi.{\bf 1}.P^r.W^t}{V.\phi.{\bf 1}.P^r.{\bf 1}^t} = \frac{k^r  V.\phi.{\bf 1}.W^t}{k^r V.\phi.{\bf 1}.{\bf 1}^t}
\end{equation}
which is constant for all $r$.

\begin{example}{\bf $P$ has only one non-null eigenvalue and $Y$ is a Markov chain of finite order.}
Consider the Markov chain ${\bf X}$ with alphabet $A = \{1,2,3,4,5\}$
and transition matrix
  \begin{equation} \label{ex4}
P^X=\left(\begin{array}{ccccc}
0.3 & 0.2 & 0.1 & 0.25 & 0.15 \\
0.2 & 0.5 & 0.3 & 0.0 & 0.0 \\
0.85 & 0.0 & 0.0 & 0.1 & 0.05 \\
0.8 & 0.0 & 0.0 & 0.0 & 0.2 \\
1.0 & 0.0 & 0.0 & 0.0 & 0.0 
\end{array} \right).
 \end{equation}
\end{example}

Notice that, ${\bf Y}$ is not Markov of order 1 since the $W$ is not
constant. On the other hand, it is easy to see that $P$ has only one
non-null eigenvalue and its triangular form has 3 non-null
rows. Therefore, by Proposition \ref{prop:markov} we have an order 4
Markov chain.

In fact, it is easy to see that 
$$P^k/\lambda_1^k =  P^3/\lambda_1^3$$
for all $k \ge 3$. 

\subsection{Relation to Gibbsianess} 
\citeasnoun{fernandez/gallo/maillard/2011} proved that the chain ${\bf Y}$ is Gibbsian (in the sense of statistical physics) if and only if we have convergence, as $m$ and $n$ diverge, of 
\[
\P(Y_{0}=1|Y_{-m-1}=1,\,Y_{-m}^{-1}=0^m,\,Y_{1}^n=0^n,\,Y_{n+1}=1)=:p_{m,n}.
\]
This is because Gibbsianess (in the sense of statistical physics) corresponds to the continuity of the ``two-sided set of transition probabilities'', or \emph{specification}. We do not enter further in detailed definitions, and refer the interested reader to \citeasnoun{fernandez/gallo/maillard/2011}, where this notion of Gibbsianess is  defined.

  Let $r_{m,n}$ denote 
\[
\sum_{\tiny\begin{array}{ccc}b_{-m}^{-1}\in\pi^{-1}(0_{-m}^{-1})\\b_{1}^{n}\in\pi^{-1}(0_{1}^{n})\end{array}}\P(X_{-m}^{-1}=b_{-m}^{-1},\,X_{0}=1|X_{-m-1}=1)\P(X_{1}^{n}=b_{1}^{n},\,X_{n+1}=1|X_0=1)
\]
and $s_{m,n}$ denote
\[
\sum_{b_{-m}^{n}\in\pi^{-1}(0_{-m}^{n})}\P(X_{-m}^{n}=b_{-m}^{n},\,X_{n+1}=1|X_{-m-1}=1).
\]
Using similar calculations as the ones performed in the proof of Proposition \ref{prop:preliminary}, it is easy to arrive at
\[
p_{m,n}=\frac{r_{m,n}}{r_{m,n}+s_{m,n}}.
\]
This quantity also has a simple matrix form, which is as follows, $p_{0,0}=\frac{P_{1,1}^2}{P_{1,0}P_{0,1}+P_{1,1}^2}$, and
\[
p_{m+1,n+1}=\frac{VP^mW^t\,\,VP^nW^t}{VP^{m+n+1}{\bf 1}^t}, \,\,\,\,\mbox{ if } m,n\geq0 .
\]
The conditions for convergence, or not, of this quantity, and consequently, for Gibbsianess or non-Gibbsianess, are the same as for continuity. In particular, this means that, in the conditions we were considering in this paper, none of the image processes are at the same time non-Gibbsian and continuous, or \textrm{vice-versa}. 

In relation to the paper of \citeasnoun{chazottes/ugalde/2003}, it is important to notice that their notion of Gibbsianess is \emph{not} that of statistical physics. They consider Gibbs measures in the sense of Bowen, which is slightly different. This makes complicated a direct comparison between our results and theirs. 

To conclude, let us mention   other recent papers with interests related to ours. As already mentioned, \citeasnoun{verb:2011} and
\citeasnoun{verb:2011b} prove that, if the original measure has continuous transition probabilities (that is, not necessarily Markov measure, but it could be) with summable continuity rate (that is $\sum\beta_k<\infty$), then, factors of this measure still have continuous transition probabilities. Similar results were obtained concerning conservation of Bowen's Gibbsianess \cite{chazottes/ugalde/2011,pollicott/kempton/2011}, and of statistical physics Gibbsianess \cite{redig/wang/2010}.


\bibliographystyle{plain}
\bibliography{myref}
\end{document}